\theoremstyle{plain} 
\theoremstyle{definition} 
\newtheorem{thm}{Theorem}[section]
\newtheorem{lem}[thm]{Lemma}
\theoremstyle{definition}
\theoremstyle{remark}
\newcommand{\be}{\begin{equation}}
\newcommand{\ee}{\end{equation}}
\newcommand{\bea}{\begin{eqnarray}}
\newcommand{\eea}{\end{eqnarray}}
\newcommand{\ben}{\begin{eqnarray*}}
	\newcommand{\een}{\end{eqnarray*}}
\newcommand{\bt}{\begin{split}}
	\newcommand{\et}{\end{split}}
\newcommand{\bet}{\begin{equation}}
\newcommand{\mr}{\mathbb{R}}
\newcommand{\ra}{\rightarrow}
\newcommand{\beq}{\begin{equation*}}
\newcommand{\eeq}{\end{equation*}}
\newcommand{\bn}{\begin{enumerate}}
\newcommand{\en}{\end{enumerate}}
\begin{document}

\title[]
{Numerical calculation method for function integration on submanifolds of $\mr^n$ or compact Riemannian manifolds}

\author[F. Deng]{Fusheng Deng}
\address{Fusheng Deng: \ School of Mathematical Sciences, University of Chinese Academy of Sciences\\ Beijing 100049, P. R. China}
\email{fshdeng@ucas.ac.cn}

\author[G. Huang]{Gang Huang}
\address{Gang Huang: \ School of Mathematical Sciences, University of Chinese Academy of Sciences\\ Beijing 100049, P. R. China}
\email{huanggang21@mails.ucas.ac.cn}

\author[Y. Wu]{Yingyi Wu}
\address{Yingyi Wu: \ School of Mathematical Sciences, University of Chinese Academy of Sciences\\ Beijing 100049, P. R. China}
\email{wuyy@gucas.ac.cn}


\begin{abstract}
    In this paper, we present a method for digitally representing the "volume element" 
    and calculating the integral of a function on compact hypersurfaces with or without boundary, 
    and low-dimensional submanifolds in $\mathbb{R}^n$. 
    We also extend such calculation to hypersurfaces in compact Riemannnian manifolds.
\end{abstract}

\maketitle

\section{Introduction}\label{sec:intro}
In calculus, it is very important to compute the integral of a function over a manifold.
When considering the integral of a function over a smooth manifold, 
it becomes evident that differential forms possess the necessary properties for intrinsically defining integrals (\cite{Lee00}). 
Specifically, we seek a k-form on a smooth manifold as a kind of "volume element" that can be integrated in a coordinate-independent way over submanifolds of dimension k. 
On the other hand, on a Riemannian manifold, the Riemannian metric can induce a measure and hence one can define the integral 
of a function on the manifold.
A classical routine for the definition of the integral of a function $f$ on a compact Riemannian manifold $(M,g)$ can be given by the following steps:
\bn[(1)]
\item cut $M$ into small pieces say $K_1,\cdots, K_N$,
\item choose points $p_j\in K_j\ (1\leq j\leq N)$ in an arbitrary way,
\item consider the finite sum $$\sum^N_j f(p_j)\mu(K_j),$$ where $\mu$ is the measure on $M$ induced from the Riemannian metric $g$,
\item then the integral is defined to be a limit
$$\int_Mf=\lim_{\max_{j}\{\text{diam}(K_j)\}\ra 0}\sum^N_j f(p_j)\mu(K_j),$$
where $\text{diam}(K_j)$ is the diameter of $K_j$ with respect to $g$.
\en

Unfortunately, in practical applications, it is almost impossible to calculate the integral by the above process.
The key trouble is that in general we do not know how to cut $M$ into pieces in a workable way and do not know how to calculate $\mu(K_j)$, 
the measure of the pieces involved in the above finite sum.

The main purpose of the present work is to propose some digital method to overcome the above difficulty for submanifolds of $\mr^n$ or even 
general Riemannian manifolds. 
We interpret the main results via a special case of Riemannian manifolds.
For a compact hypersurface $M$ in $\mr^n$ and a sample of points $\{p_j\}^N_{j=1}$ in $M$, 
we try to find a way to endow each $p_j$ ``a volume element'' $\mu_j$ and approximate the integral $\int_M f$ by the finite sum
$\sum_{j=1}^Nf(p_j)\mu_j$.
From the construction, one can expect that $\sum_{j=1}^Nf(p_j)\mu_j$ is very close to $\int_Mf$ if the sample $\{p_j\}^N_{j=1}$
is sufficiently dense in $M$.

Our method is motivated by the work in \cite{LXSW22}.
The authors of that paper aim to reconstruct a closed surface in $\mr^3$ via the Gauss formula from 
a given point-cloud, without knowing normal of the surface at these points.
Utilizing the Gauss formula, they define an equation to obtain consistently oriented outward-pointing surface elements for surface reconstruction, which is called Parametric Gauss Reconstructions(PGR). 

We begin the present paper with a discussion of the above work about how to obtain volume elements, 
but our objective has shifted from reconstructing surfaces from unoriented point clouds to numerically representing the volume element
associated to a given point-cloud. 
We will use the divergence theorem on Riemannian manifolds (\cite{Lee00}) as a tool, with Gauss's formula as a special example. 
The divergence theorem provides an integral formula for the indicator function, which is related to the fundamental solution of Laplace operator
and is called the double layer potential in potential theory (\cite{F95}).  
By viewing the volume elements as unkown parameter, we define the discrete version of the indicator function 
via a linear system constructed from the divergence theorem. 
As a result, the volme elements can be directly estimated from the linear system, 
which enables us to calculate the integral of functions on the involved surface, as explained above.

 The method is theoretically feasible. One the other hand, its effectiveness and
 stability are also important topics. 
 But these are not the main focus of this paper, so they will not be discussed in depth here. 

We first develop a numerical method for computing the intrgral of a function on a closed hypersurface in $\mathbb{R}^n$,
based on the ideal in \cite{LXSW22} as mentioned above. 
On the other hand, in reality, the majority of surfaces we commonly encounter are surfaces with boundaries rather than closed surfaces. 
When considering  surfaces with boundaries, our main insight is to create a duplicate of the surface through thickening the original surface. 
Then the original integral can be calculated via the newly constructed surface without boundary. 
We also consider submanifolds in $\mr^n$ of higher codimension.
For this case, our trick is to transfer the consideration from a submanifold to the boundary of its tubular neighborhoods. 

In principle, the above consideration for submanifolds in $\mr^n$ can be generalized to submanifolds in general Riemannian manifolds.
This paper provides an explicit integral formula for the indicator function of a relatively compact domain with smooth boundary in a Riemannian manifold. This method is particularly effective if we know the explicit formula of the fundamental solution of the Laplace operator on the Riemannian manifold. 
In conclusion, we present an algorithm for calculating integrals of functions on the boundaries related compact domains in general Riemannian manifolds.

Finally we point out that it is possible to calculate the integral of functions on a Riemannian manifold,
without embedding it into some other ambient spaces.
This will involves other integral formulas for functions on Riemannian manifolds 
that are also related the Green functions or heat kernel on Riemannian manifolds.
This topic will be investigated further in forthcoming works.

%

The paper is organized as follows. 
Section 2 provides an overview of some related concepts and facts. In Section 3, we consider integrals of functions on submanifolds in $\mathbb{R}^n$, and in the final Section 4 we consider integrals of functions on the boundaries related compact domains in general Riemannian manifolds.

\textbf{Acknowledgements.}
The authors are very grateful to Dr. Yifei Feng for drawing our attention to the work in \cite{LXSW22} by giving a wonderful report on this paper in our joint seminar,
and to Dr. Hongyu Ma and Professor Liyong shen for helpful discussions.
This research is supported by National Key R\&D Program of China (No. 2021YFA1003100) and the Fundamental Research Funds for the Central Universities.

\section{Preliminaries}\label{sec:pre}

This section briefly introduces some related basic concepts and results. 
Detailed treatments can be found in \cite{Lee00}, \cite{P16}.


\subsection{The divergence theorem}

Let $M$ be a smooth manifold with or without boundary. Recall that a Riemannian metric on $M$ is a smooth symmetric covariant 2-tensor field on $M$ that is positive definite at each point. A Riemannian manifold is a pair $(M, g)$, where $M$ is a smooth manifold and $g$ is a Riemannian metric on $M$.
If $g$ is a Riemannian metric on $M$, then for each $p \in M$, the 2-tensor $g_p$ is an inner product on $T_pM$.
In any smooth local coordinates ($x^i$), a Riemannian metric can be written as $$g = g_{ij}dx^{i}dx^{j},$$ where ($g_{ij}$) is a symmetric positive difinite matrix of smooth functions. The simplest example of a Riemannian metric is the Euclidean metric $\overline{g}$ on $\mathbb{R}^{n}$ which is given by $$\overline{g} = \delta_{ij}dx^{i}dx^{j},$$ 
where $\delta_{ij}$ is the Kronecker delta.

\begin{thm}\label{thm: the divergence theorem}(\cite{Lee00} Theorem 16.32)
    (The Divergence Theorem) Let $(M, g)$ be an oriented Riemannian manifold with boundary. 
    For any compactly supported smooth vector field $X$ on $M$,
    \begin{equation}
        \int_{M} divX dV_{M} = \int_{\partial M} g(X, \vec{n})dV_{\partial M}
    \end{equation}
    where $\vec{n}$ is the outward-pointing unit normal vector field along $\partial M$,  
    $dV_{M}$ is the Riemannian volume form of $M$ and $dV_{\partial M}$ is the Riemannian volume form of $\partial M$ induced from $g$.
\end{thm}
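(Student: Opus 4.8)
The plan is to deduce the statement from Stokes' theorem by rewriting the divergence as an exterior derivative. Let $n = \dim M$ and write $dV_M$ for the Riemannian volume form determined by $g$ and the orientation. The starting point is the intrinsic characterization of the divergence of a vector field, namely the identity
\begin{equation}
  d(\iota_X dV_M) = (\operatorname{div} X)\, dV_M,
\end{equation}
where $\iota_X$ denotes interior multiplication (contraction) by $X$. If the divergence has been defined in local coordinates by $\operatorname{div} X = (\det g)^{-1/2}\partial_i\big((\det g)^{1/2} X^i\big)$, then I would first verify this identity by a direct computation in an oriented chart, using $dV_M = (\det g)^{1/2}\, dx^1\wedge\cdots\wedge dx^n$ and expanding $\iota_X dV_M$; alternatively, one may take this identity, or equivalently the Lie-derivative formula $\mathcal{L}_X dV_M = (\operatorname{div} X)\, dV_M$ (which follows from Cartan's formula since $d\,dV_M = 0$ for a top-degree form), as the \emph{definition} of $\operatorname{div} X$.

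Granting the identity, set $\omega = \iota_X dV_M$, an $(n-1)$-form that is compactly supported because $X$ is. Applying Stokes' theorem to $\omega$ on the oriented manifold with boundary $M$ gives
\begin{equation}
  \int_M (\operatorname{div} X)\, dV_M = \int_M d\omega = \int_{\partial M} j^*\omega,
\end{equation}
where $j\colon \partial M \hookrightarrow M$ is the inclusion and $\partial M$ carries the induced boundary orientation. It therefore remains to identify the pulled-back form $j^*(\iota_X dV_M)$ with $g(X,\vec{n})\, dV_{\partial M}$ along $\partial M$.

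For this last step I would work pointwise on $\partial M$ and decompose $X$ into its normal and tangential parts, $X = g(X,\vec{n})\,\vec{n} + X^{\top}$, where $X^{\top}$ is tangent to $\partial M$. Writing $\vec{n}^{\flat} = g(\vec{n}, \cdot)$ for the dual $1$-form, compatibility of the volume forms with the induced orientation yields $dV_M = \vec{n}^{\flat}\wedge dV_{\partial M}$ along $\partial M$. Contracting and using the derivation property of $\iota_X$,
\begin{equation}
  \iota_X dV_M = g(X,\vec{n})\, dV_{\partial M} - \vec{n}^{\flat}\wedge (\iota_X dV_{\partial M}),
\end{equation}
and since $j^*\vec{n}^{\flat} = 0$ (because $\vec{n}^{\flat}$ annihilates every vector tangent to $\partial M$), the second term pulls back to zero, giving $j^*(\iota_X dV_M) = g(X,\vec{n})\, dV_{\partial M}$. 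Substituting this into the Stokes identity completes the argument.

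I expect the main obstacle to be the bookkeeping around orientations and the boundary normal rather than anything conceptually deep: one must check that the induced orientation on $\partial M$ used in Stokes' theorem is exactly the one for which $dV_M = \vec{n}^{\flat}\wedge dV_{\partial M}$ holds with $\vec{n}$ the \emph{outward}-pointing unit normal, so that the sign in the final formula is correct. A clean way to control this is to choose, at each boundary point, an oriented orthonormal frame $(\vec{n}, e_2,\dots,e_n)$ for $T_pM$ with $(e_2,\dots,e_n)$ an oriented orthonormal frame for $T_p(\partial M)$, and to evaluate both sides of the desired equality on $(e_2,\dots,e_n)$; this reduces the normalization to the defining property of the induced volume form together with the convention that the outward normal is listed first.
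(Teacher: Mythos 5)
The paper gives no proof of this theorem---it is quoted verbatim from \cite{Lee00} (Theorem 16.32)---and your argument is precisely the standard proof found there: the identity $d(\iota_X dV_M)=(\operatorname{div}X)\,dV_M$, Stokes' theorem, and the boundary identification $j^*(\iota_X dV_M)=g(X,\vec n)\,dV_{\partial M}$ via $dV_M=\vec n^{\flat}\wedge dV_{\partial M}$ with the outward-normal-first orientation convention. Your proposal is correct, including the orientation bookkeeping, so there is nothing to add.
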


\subsection{Riemannian coordinates}
For a tangent vector $v \in T_{p}M$, there exists a unique geodesic $r_{v}(t)$ on $[0, 1]$ such that $r_{v}(0) = p, r_{v}^{'}(0) = v$. 
For $0<\delta<<1$, the exponential map 
$$v\mapsto \exp_p(v):=r_v(1)$$
is well defined on the open set
$$B_{p}(\delta ) = \{ v \in T_{p}M; |v|<\delta\}$$
in $T_pM$. If $(M,g)$ is complete and boundary free, the exponential map can be defined on the whole tangent space $T_pM$.

\begin{thm}\label{thm: exponential map diffeomorphism}(\cite{P16} Corollary 5.5.2)
    Let $(M, g)$ be a Riemannian manifold and $K\subset M$ be compact. There exists $\delta >0$ for every $p\in K$,the map $$\exp_p : B_{p}(\delta )\rightarrow U \subset M$$ is defined and a diffeomorphism onto its image.
    
\end{thm}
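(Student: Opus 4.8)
The plan is to reduce the uniform statement over the compact set $K$ to the inverse function theorem applied to a single globally defined map, thereby packaging both the existence of $\exp_p$ and its injectivity into one diffeomorphism statement. First I would introduce the \emph{total exponential map}
\begin{equation*}
E(p,v) = (p, \exp_p(v)),
\end{equation*}
defined on an open neighborhood $\mathcal{O}$ of the zero section $\{(p,0): p \in M\}$ inside the tangent bundle $TM$. That $\mathcal{O}$ is open and contains the whole zero section follows from the smooth dependence of the geodesic flow on its initial data together with the observation that the constant geodesic places $(p,0)$ in the domain for every $p$. This replaces the fibrewise maps $\exp_p$ by a single map $E \colon \mathcal{O} \to M \times M$ to which global compactness arguments apply.

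The second step is the differential computation at the zero section. Using the natural splitting of $T_{(p,0)}TM$ into horizontal and vertical parts and the identity $d(\exp_p)_0 = \mathrm{id}_{T_pM}$, I would check that $dE_{(p,0)}$ is an isomorphism for every $p$: in block form it is lower triangular with identity blocks on the diagonal, hence invertible. Therefore $E$ is a local diffeomorphism at each point of the zero section. Moreover, because $dE_{(p,v)}$ is block lower triangular with the vertical-vertical block equal to $d(\exp_p)_v$, invertibility of $dE_{(p,v)}$ forces invertibility of $d(\exp_p)_v$ wherever $E$ is a local diffeomorphism at $(p,v)$.

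The third and crucial step is to pass from these pointwise statements to a single $\delta$ valid over all of $K$. The set $Z_K = \{(p,0): p \in K\}$ is compact, $E$ is injective on $Z_K$ (since $E(p,0)=(p,p)$), and $E$ is a local diffeomorphism near $Z_K$. I would then invoke the standard lemma that a local diffeomorphism which is injective on a compact set is injective on some open neighborhood of that set; the proof is a contradiction argument using competing sequences $(p_n,v_n)\neq(q_n,w_n)$ with equal images converging into $Z_K$, where the limit forces a coincidence of base points and then local injectivity yields a contradiction. Shrinking this neighborhood to a uniform tube $\{(p,v): p \in K,\ |v|<\delta\}$ produces the desired $\delta$. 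On such a tube $E$ is an injective local diffeomorphism, hence a diffeomorphism onto its image; restricting to a single fibre shows that each $\exp_p \colon B_p(\delta) \to M$ is an injective immersion with everywhere invertible differential, that is, a diffeomorphism onto its image.

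I expect the uniformity in the third step to be the main obstacle: the inverse function theorem alone only yields a point-dependent radius $\delta_p$, and the whole content of the theorem is that compactness of $K$ allows these radii to be chosen uniformly. The contradiction argument must be set up so that the competing sequences remain in a fixed compact neighborhood of $Z_K$, guaranteeing that limits exist; passing at the outset to a relatively compact open set containing $K$ handles this point cleanly.
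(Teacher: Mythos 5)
The paper does not actually prove this statement: it is quoted as a preliminary result, with the proof deferred to the cited reference (Petersen, Corollary 5.5.2). Your argument --- introducing the total exponential map $E(p,v)=(p,\exp_p v)$ on a neighborhood of the zero section of $TM$, checking $dE$ is invertible there via its block lower-triangular form, and then using compactness of $K$ with a contradiction argument on competing sequences to upgrade the pointwise radii from the inverse function theorem to a uniform $\delta$ --- is precisely the standard proof of that result, and it is correct as outlined, including the two points that usually get glossed over (that the invertibility locus of $dE$ and the injectivity neighborhood each contain a uniform tube $\{(p,v):p\in K,\ |v|<\delta\}$ by a tube-lemma argument, and that invertibility of $dE_{(p,v)}$ forces invertibility of $d(\exp_p)_v$). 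One small simplification you could make: since $E$ preserves the base point, equal images in your competing sequences force $p_n=q_n$ immediately, not merely in the limit, so the contradiction reduces to failure of injectivity of $\exp_{p_n}$ on shrinking balls with $p_n\to p\in K$, contradicting local injectivity of $E$ at $(p,0)$.
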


Define the function $r(x) = |\exp_{p}^{-1}(x)|$. we have

\begin{thm}\label{thm: Gauss lemma}(\cite{P16} Theorem 5.5.5)
    (Gauss lemma) On $(U, g)$ the function $r$ has gradient $\bigtriangledown r = \partial_{r}$, where $\partial_{r} = D\exp_{p}(\partial_{r})$.
\end{thm}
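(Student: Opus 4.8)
The plan is to reduce the identity $\nabla r = \partial_r$ to the classical orthogonality assertion of the Gauss lemma, namely that the radial geodesics issuing from $p$ meet the geodesic spheres $\{r = \text{const}\}$ orthogonally. Since the gradient is characterized by $\langle \nabla r, X\rangle = dr(X) = X(r)$ for every tangent vector $X$, and since at each point $q \in U\setminus\{p\}$ any $X \in T_qM$ splits as $X = a\,\partial_r + Y$ with $Y$ tangent to the geodesic sphere through $q$, it suffices to establish three facts: $\partial_r(r) = 1$, $|\partial_r| = 1$, and $\langle \partial_r, Y\rangle = 0$ whenever $Y$ is tangent to a geodesic sphere. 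The transversality needed for the decomposition follows because $\partial_r(r) = 1 \neq 0$ while $Y(r) = 0$.

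First I would dispose of the two radial facts. Fix a unit vector $v \in T_pM$ and let $\gamma(t) = \exp_p(tv)$ be the corresponding unit-speed radial geodesic; by definition $r(\gamma(t)) = t$, so $\partial_r(r) = \frac{d}{dt}r(\gamma(t)) = 1$, while $|\partial_r| = |\gamma'| = |v| = 1$ because geodesics have constant speed. Once orthogonality is known, the decomposition $X = a\,\partial_r + Y$ gives $X(r) = a\cdot 1 + Y(r) = a$ and $\langle \partial_r, X\rangle = a\,|\partial_r|^2 + \langle\partial_r, Y\rangle = a$, so $\langle \partial_r, X\rangle = X(r)$ for all $X$, which is precisely $\nabla r = \partial_r$.

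The heart of the argument is the orthogonality, which I would prove by a one-parameter variation through geodesics. Choose $w \in T_pM$ with $w \perp v$ and a curve $s \mapsto v(s)$ in the unit sphere of $T_pM$ with $v(0) = v$ and $v'(0) = w$, and set $f(s,t) = \exp_p(t\,v(s))$. Then $J(t) = \partial_s f(0,t) = D\exp_p(tv)(tw)$ is the Jacobi field along $\gamma$ with $J(0) = 0$, and $\gamma'(t) = \partial_t f(0,t) = D\exp_p(tv)(v)$. Using that each $t \mapsto f(s,t)$ is a geodesic (so $\frac{D}{\partial t}\partial_t f = 0$) together with the symmetry lemma $\frac{D}{\partial t}\partial_s f = \frac{D}{\partial s}\partial_t f$, I compute
\begin{equation}
\frac{d}{dt}\langle \partial_t f, \partial_s f\rangle = \Big\langle \partial_t f, \tfrac{D}{\partial t}\partial_s f\Big\rangle = \Big\langle \partial_t f, \tfrac{D}{\partial s}\partial_t f\Big\rangle = \tfrac12\,\partial_s|\partial_t f|^2 = \tfrac12\,\partial_s|v(s)|^2 = 0,
\end{equation}
the last equality because $|v(s)| \equiv 1$. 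Restricting to $s = 0$, the quantity $\langle \gamma', J\rangle$ is constant in $t$, and evaluating at $t = 0$ where $J(0) = 0$ yields $\langle \gamma'(t), J(t)\rangle = 0$ for all $t$, i.e. $\langle D\exp_p(tv)(v), D\exp_p(tv)(w)\rangle = 0$.

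I expect the main obstacle to be bookkeeping rather than conceptual: justifying the identification $J(t) = D\exp_p(tv)(tw)$, applying the symmetry lemma correctly to the map $f$ on a rectangle, and verifying that the vectors $D\exp_p(tv)(w)$ with $w \perp v$ actually span the full tangent space to the geodesic sphere through $\exp_p(tv)$. This last point relies on $\exp_p$ being a diffeomorphism onto $U$ (Theorem~\ref{thm: exponential map diffeomorphism}), so that $D\exp_p(tv)$ is an isomorphism and the orthocomplement of $v$ is carried onto the sphere's tangent space. Once these identifications are secured, the displayed computation closes the argument.
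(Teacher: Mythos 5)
The paper contains no proof of this statement: it is quoted as background directly from Petersen (\cite{P16}, Theorem 5.5.5), so the only meaningful comparison is with the cited source. Your argument is correct and complete, and it is the classical variational proof of the Gauss lemma: reduce $\nabla r = \partial_r$ on $U\setminus\{p\}$ to the three facts $\partial_r(r)=1$, $|\partial_r|=1$, and orthogonality of $\partial_r$ to the geodesic spheres, then obtain the orthogonality from constancy of $\langle \partial_t f, \partial_s f\rangle$ along radial geodesics, using the symmetry lemma together with $|\partial_t f| = |v(s)| \equiv 1$ and the initial condition $\partial_s f(s,0)=0$. Petersen's own proof is the same computation in different clothing: he works with polar coordinate fields $\partial_r,\partial_i$ on $U\setminus\{p\}$, uses the geodesic equation $\nabla_{\partial_r}\partial_r=0$ and the commutation $[\partial_r,\partial_i]=0$ to get $\partial_r\, g(\partial_r,\partial_i)=\frac{1}{2}\partial_i|\partial_r|^2=0$, and then lets $r\to 0$ to identify the constant as zero. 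Your Jacobi-field formulation replaces the coordinate commutation by the symmetry lemma $\frac{D}{\partial t}\partial_s f=\frac{D}{\partial s}\partial_t f$ and the limiting argument by evaluation at $t=0$; the two routes are essentially interchangeable, with yours avoiding the need to introduce coordinate vector fields on the sphere.

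Two points are worth tightening, though neither is a gap. First, $r$ is not differentiable at $p$ itself, so the identity $\nabla r=\partial_r$ should be asserted on $U\setminus\{p\}$; you implicitly work at $q\neq p$ but should say so explicitly. Second, the bookkeeping you flag at the end is genuinely harmless: $J(t)=D\exp_p(tv)(tw)$ is just the chain rule under the canonical identification $T_{tv}(T_pM)\cong T_pM$, and the spanning claim follows exactly as you indicate from Theorem~\ref{thm: exponential map diffeomorphism} --- the geodesic sphere $\{r=t\}$ is the diffeomorphic image of the round sphere of radius $t$ in $T_pM$, whose tangent space at $tv$ is $v^{\perp}$, and $D\exp_p(tv)$ is an isomorphism, so $D\exp_p(tv)(v^{\perp})$ is all of the sphere's tangent space by dimension count. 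With those two sentences added, your proof is a complete substitute for the citation.
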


The open set 
$$B(p, \delta) = \exp_{p}(B_{p}(\delta)) = \{q\in M ; d(p,q)<\delta\}$$
in $M$ is called a geodesic ball of radius $\delta$, 
where $d(p,q)$ is the distance on $M$ induced by the Riemannian metric $g$.
By theorem \ref{thm: exponential map diffeomorphism}, when $\delta$ is sufficiently small, 
the exponential map $exp_{p}$ is diffeomorphism from $B_{p}(\delta)$ to $B(p, \delta) $. What's more, $\partial B(p, \delta) $ and $S^{n-1}$ is homeomorphism. 
We identity $T_{p}M$ with $\mathbb{R}^n$ by choosing an orthonormal basis of $T_pM$, 
then the diffeomorphism $\varphi = \exp_{p}^{-1} : U \rightarrow V$ induces a coordinate on $U$,
which is called the exponential or normal coordinates at $p$. 
Under a normal coordinate $(x^1,\cdots, x^n)$, the Reimannian metric can be locally given as 
\begin{equation}\label{eqn: exponential coordinates}
    g_{ij}=\delta_{ij}+o(|x|^2),
\end{equation}
which means that the Riemannian metric can be always approximated by the flat metric locally up to the first order (see \cite{Lee00} Lemma 5.5.7).

\subsection{Two more lemmas}

\begin{lem}\cite{F95}\label{lem: estimate of normal vector}
    Let $S$ is a closed hypersurface on $\mathbb{R}^n$. There is a constant $c > 0$ such that for all $x, y \in S$, $$|(x-y)\cdot \vec{n}(y)| \leq c|x-y|^2$$ where $\vec{n}(y)$ is the outward unit normal vector at $y$ on $S$.
\end{lem}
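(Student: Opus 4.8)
The plan is to reduce the inequality to a purely local statement about $S$ as a graph over its tangent plane, prove the quadratic bound there by a second-order Taylor expansion, and then upgrade the local estimate to a global one using the compactness of $S$. Throughout I assume, as is implicit in the paper, that $S$ is a compact embedded hypersurface of class at least $C^2$, so that the normal field $\vec{n}$ and the second fundamental form are continuous on all of $S$.

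First I would set up the local picture. Fix $y\in S$ and apply a rigid motion so that $y$ becomes the origin and $\vec{n}(y)$ points along the $x_n$-axis; the tangent plane $T_yS$ is then the horizontal hyperplane $\{x_n=0\}$. Since $S$ is $C^2$, in a neighborhood of $y$ it is the graph $x_n=\phi(x')$ of a $C^2$ function $\phi$ of $x'=(x_1,\dots,x_{n-1})$ with $\phi(0)=0$ and $\nabla\phi(0)=0$ (the gradient vanishes because the tangent plane is horizontal). In these coordinates the quantity we must estimate is exactly the height: $(x-y)\cdot\vec{n}(y)=x_n=\phi(x')$. Taylor's theorem with the first-order term absent gives $|\phi(x')|\le \tfrac12\,\|D^2\phi\|_{\infty}\,|x'|^2$, and since $x'$ is the orthogonal projection of $x-y$ onto $T_yS$ we have $|x'|\le|x-y|$. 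Hence $|(x-y)\cdot\vec{n}(y)|\le \tfrac12\,\|D^2\phi\|_{\infty}\,|x-y|^2$ whenever $x$ lies in the graph neighborhood of $y$.

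The key step is to make both the radius of the graph neighborhood and the Hessian bound uniform in $y$. This is where compactness enters: the second fundamental form is continuous on the compact set $S$ and therefore bounded, which controls $\|D^2\phi\|_{\infty}$ uniformly; and a standard finite-subcover (or tubular-neighborhood) argument produces a single $\rho>0$ such that the graph representation is valid whenever $|x-y|<\rho$, for every $y$. This yields a constant $C_1$ with $|(x-y)\cdot\vec{n}(y)|\le C_1|x-y|^2$ for all $x,y\in S$ with $|x-y|<\rho$. For the remaining pairs with $|x-y|\ge\rho$ the estimate is trivial: since $\vec{n}(y)$ is a unit vector, $|(x-y)\cdot\vec{n}(y)|\le|x-y|\le \rho^{-1}|x-y|^2$. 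Taking $c=\max\{C_1,\rho^{-1}\}$ completes the proof. I expect the only real obstacle to be the uniformity claim of this paragraph; it can alternatively be obtained in a single stroke from the fact that a compact $C^2$ hypersurface has positive reach, which furnishes a uniform interior and exterior tangent ball of some radius $R$ at every point, and expanding $|x-(y\pm R\vec{n}(y))|^2\ge R^2$ immediately gives $|(x-y)\cdot\vec{n}(y)|\le (2R)^{-1}|x-y|^2$ for all $x,y\in S$ at once.
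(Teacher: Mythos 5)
Your proof is correct, and it coincides with the paper's treatment: the paper offers no proof of this lemma, citing it to Folland \cite{F95}, whose argument is exactly your main route — write $S$ locally as a $C^2$ graph over $T_yS$ with $\phi(0)=0$, $\nabla\phi(0)=0$, get the quadratic bound from Taylor's theorem, make the graph radius and Hessian bound uniform by compactness, and handle $|x-y|\ge\rho$ by the trivial estimate $|(x-y)\cdot\vec{n}(y)|\le|x-y|\le\rho^{-1}|x-y|^2$. Your alternative via uniform interior and exterior tangent balls (positive reach), expanding $|x-(y\pm R\vec{n}(y))|^2\ge R^2$ to get $|(x-y)\cdot\vec{n}(y)|\le(2R)^{-1}|x-y|^2$ in one stroke, is also valid and is arguably cleaner, as it dispenses with the finite-subcover uniformity step.
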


The following lemma will be used in the estimate of the fundamental solution to the Laplace equation in our discussion.

\begin{lem}\label{lem: the estimate of integration} (\cite{A82} Proposition 4.12)
    Let $\Omega$ be a bounded open set of $\mathbb{R}^n$ and Let $X(p,q)$ and $Y(p,q)$ be continuous functions defined on $\Omega \times \Omega$ minus the diagonal which satisfying
    $$|X(p,q)| \leq Const \times |d(p,q)|^{\alpha -n}$$ and  $$|Y(p,q)| \leq Const \times |d(p,q)|^{\beta -n}$$
    for some real numbers $\alpha , \beta$ in $(0, n)$. Then $$ Z(p,q) = \int_{\Omega} X(p,t)Y(t,q)dt_1\cdots dt_n$$ is continuous for $p \neq q$ and satisfies:
    $$|Z(p,q)| \leq Const \times |d(p,q)|^{\alpha  +\beta -n}, \quad if \quad \alpha  + \beta < n,$$
    $$|Z(p,q)| \leq Const \times [1 + |\ln d(p,q)|],\quad if \quad \alpha  + \beta = n,$$
    $$|Z(p,q)| \leq Const ,\quad if \quad \alpha  + \beta > n.$$
\end{lem}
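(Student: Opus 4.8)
The plan is to reduce the matrix-free statement to a single scalar integral and then estimate that integral by a geometric decomposition of $\Omega$. From the pointwise hypotheses on $X$ and $Y$ one has immediately
\[
|Z(p,q)| \le C^2 \int_{\Omega} |p-t|^{\alpha-n}\,|t-q|^{\beta-n}\,dt,
\]
so everything rests on bounding the right-hand side. Writing $r=d(p,q)=|p-q|$, I would split $\Omega$ into three pieces: the ball $A=B(p,r/2)\cap\Omega$, the ball $B=B(q,r/2)\cap\Omega$, and the remainder $R=\Omega\setminus(A\cup B)$. The point of the decomposition is that on each piece one of the two singular factors becomes harmless because the corresponding distance is bounded below, converting a two-center integral into a one-center radial integral.

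On the piece $A$ one has $|t-q|\ge r/2$, and since $\beta-n<0$ the factor $|t-q|^{\beta-n}$ is at most $(r/2)^{\beta-n}$; the remaining integral $\int_{B(p,r/2)}|t-p|^{\alpha-n}\,dt=\tfrac{\omega_{n-1}}{\alpha}(r/2)^{\alpha}$ is finite precisely because $\alpha>0$. Hence the contribution of $A$ is $\le C\,r^{\alpha+\beta-n}$, and the piece $B$ is handled by the symmetric argument (interchanging $p,q$ and $\alpha,\beta$), yielding the same bound. Since $r$ is bounded by the diameter of $\Omega$, this quantity $r^{\alpha+\beta-n}$ already fits within the claimed bound in each of the three regimes.

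The heart of the argument is the far piece $R$, where $|t-p|\ge r/2$ and $|t-q|\ge r/2$ simultaneously. A triangle-inequality comparison, $|t-q|\le|t-p|+r\le 3|t-p|$ and symmetrically, shows that the two distances are comparable, $\tfrac13|t-p|\le|t-q|\le 3|t-p|$, so the product collapses to a single power, $|t-p|^{\alpha-n}|t-q|^{\beta-n}\le C|t-p|^{\alpha+\beta-2n}$. Passing to polar coordinates about $p$ then reduces the contribution of $R$ to the elementary radial integral $\int_{r/2}^{D} s^{\alpha+\beta-n-1}\,ds$, where $D$ bounds the diameter of $\Omega$. Evaluating this integral reproduces exactly the trichotomy of the statement: if $\alpha+\beta<n$ the exponent is $<-1$, the integral converges at the upper limit and is of order $r^{\alpha+\beta-n}$; if $\alpha+\beta=n$ it is $\int_{r/2}^{D}s^{-1}\,ds$, of size $1+|\ln r|$; and if $\alpha+\beta>n$ the integral is dominated by its upper endpoint and is bounded by a constant. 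Combining the three pieces gives the asserted estimates, with constants depending only on $n,\alpha,\beta$ and $\mathrm{diam}(\Omega)$.

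For the continuity of $Z$ off the diagonal I would fix $(p_0,q_0)$ with $p_0\ne q_0$, choose $\rho<\tfrac14|p_0-q_0|$, and apply the same three-region splitting: on the region away from the fixed balls $B(p_0,\rho),B(q_0,\rho)$ the integrand is jointly continuous and uniformly bounded, so dominated convergence applies, while on each fixed ball one singularity is absent and the other is locally integrable with the complementary factor continuous and bounded. I expect the genuinely clever step to be the comparability estimate on the far region $R$, and the main effort to be bookkeeping: keeping the three cases of $\alpha+\beta$ aligned throughout and checking that all constants are uniform in $(p,q)$. The only real subtlety in the continuity part is that the singularities at $t=p$ and $t=q$ move with the arguments, which is why the decomposition against \emph{fixed} balls around $p_0,q_0$ rather than against moving ones is essential.
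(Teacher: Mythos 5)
Your proof is correct, and it is the standard argument: the paper itself gives no proof of this lemma (it is imported verbatim from Aubin, Proposition 4.12), and Aubin's proof proceeds by essentially the same three-region decomposition near $p$, near $q$, and the far region where the two distances are comparable, followed by the same radial-integral trichotomy. The only point worth tightening is the continuity step: even inside the fixed ball $B(p_0,\rho)$ the singularity at $t=p$ still moves with $p$, so you need the uniform smallness $\int_{B(p,\delta)}|p-t|^{\alpha-n}\,dt\le C\delta^{\alpha}$ to excise a small moving ball before applying continuity on the remainder --- which your closing remark gestures at but does not quite spell out.
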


\section{Numerical calculation method for function integration in submanifold in $ \mathbb{R}^n $}
The main interest in this section is to consider the following problem:
given a compact submanifold $M \subset \mathbb{R}^n$ and $f \in C^0(\mathbb{R}^n)$, can one compute $$\int_{M}f(x)dV?$$
where $dV$ is the volume form on $M$ induced from the standard flat metric on $\mr^n$.
In most practical contexts, we just know samples of points in $M$, but do not know the exact formula of $M$. 
If $M$ is a hypersurface without boundary, we can propose a appropriate digital method to calculate the above integration
from the information of two samples of points in $M$.
In the cases that $M$ has boundary or  higher codimension, 
we need additional information to calculate the integral, namely, the normals of $M$ at the sample points.

\subsection{Hypersurfaces in $\mathbb{R}^n$}\label{subsec:hypersurface without boundary}

Let $\Omega$ be a bounded connected open set in $\mathbb{R}^n (n>2)$ whose boundary $\partial \Omega $ is a smooth hypersurface. 
Then $\partial \Omega $ divides $ \mathbb{R}^n $ into two regions: an inner region $\Omega $ and an outer one, the interior of $\mathbb{R}^n\backslash \Omega $. 
Let $N$ be the outward-pointing unit normal vector field on $\partial\Omega$.

We now introduce and modify the method in \cite{LXSW22} to calculate the surface elements on $\partial\Omega$.
The work in \cite{LXSW22} only consider surfaces in $\mr^3$, but the idea can be generalized to higher dimensional cases.

The starting point is the following lemma,  which expresses the 0-1 indicator function of $\Omega$ in terms of certain integral on the boundary.

\begin{lem}[\cite{F95}]\label{lem:Gauss formula}
    Let $\Omega \subset \mathbb{R}^n$ be a bounded open set with smooth boundary $\partial\Omega$. Then 
\begin{equation}\label{eqn: indicator function in $R^n$}
  \chi (x):= -\int_{\partial \Omega} \nabla G(x-y)\cdot N(y)d\tau(y)=
    \begin{cases}
        0 ,\quad x=R^n
        \backslash \overline{\Omega}\\ \frac{1}{2} ,\quad x=\partial\Omega\\ 1 ,\quad x\in \Omega\\
    \end{cases},
\end{equation}
    where $d\tau(y)$ is the hypersurface area form of $\partial\Omega$, and $G(x-y)$ is the fundamental solution to the n-dimensional Laplace equation, i.e.
    \begin{equation}
        G(x-y)= \frac{|x-y|^{2-n}}{(n-2)\omega_n},
    \end{equation}
    with $\omega_n$ being the volume of the unit sphere in $R^n$.
\end{lem}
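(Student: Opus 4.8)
The plan is to read all three values off the divergence theorem (Theorem \ref{thm: the divergence theorem}) applied to the vector field $V(y) = \nabla_y G(x-y)$ on suitable subdomains of $\Omega$, exploiting that $G$ is the fundamental solution of the Laplacian: by construction $V$ is smooth and divergence-free away from the single point $y=x$ (there $\mathrm{div}_y V = (\Delta G)(x-y) = 0$), while $G$ carries a unit of ``charge'' at $y=x$. The three cases in the statement then correspond precisely to whether the singularity $y=x$ lies outside $\overline{\Omega}$, inside $\Omega$, or on $\partial\Omega$.

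First I would dispose of the exterior case $x\in\mr^n\setminus\overline{\Omega}$: here $y\mapsto G(x-y)$ is smooth on a neighborhood of $\overline{\Omega}$ and $\mathrm{div}_y V\equiv 0$ on $\Omega$, so the divergence theorem gives $\int_{\partial\Omega} V\cdot N\,d\tau=\int_\Omega \mathrm{div}_y V\,dy=0$, i.e. $\chi(x)=0$. For the interior case $x\in\Omega$ the field $V$ is singular at $y=x$, so I would excise a small Euclidean ball $B(x,\varepsilon)\subset\Omega$ and apply the divergence theorem on $\Omega\setminus\overline{B(x,\varepsilon)}$, where $V$ is again divergence-free. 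This relates the integral over $\partial\Omega$ to the integral over the sphere $\partial B(x,\varepsilon)$; inserting the explicit formula for $G$, on $\partial B(x,\varepsilon)$ the integrand is a constant of size $1/(\omega_n\varepsilon^{n-1})$, and since that sphere has area $\omega_n\varepsilon^{n-1}$ the $\varepsilon$-dependence cancels and the contribution is a fixed number independent of $\varepsilon$, yielding $\chi(x)=1$.

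The boundary case $x\in\partial\Omega$ is where the real work lies, and it splits into two parts. The first is that the defining integral is now improper, since the singularity $y=x$ sits on the domain of integration. Here I would invoke Lemma \ref{lem: estimate of normal vector}, which gives $|(x-y)\cdot N(y)|\le c|x-y|^2$ and hence bounds the integrand by $C|x-y|^{2-n}$; this exponent is integrable against the $(n-1)$-dimensional area element near $x$, since $\int_0^\delta r^{2-n}\,r^{n-2}\,dr<\infty$. Absolute convergence then lets me realize $\chi(x)$ as the limit of $\int_{\partial\Omega\setminus B(x,\varepsilon)}$ as $\varepsilon\to 0$. The second part is the evaluation: excising $B(x,\varepsilon)$ centered now at the boundary point and applying the divergence theorem on $\Omega\setminus\overline{B(x,\varepsilon)}$, the inner boundary is no longer a full sphere but the spherical cap $\partial B(x,\varepsilon)\cap\Omega$.

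The hard part will be controlling this cap as $\varepsilon\to 0$. Because $\partial\Omega$ is smooth, near $x$ it is well approximated by its tangent hyperplane, so the cap $\partial B(x,\varepsilon)\cap\Omega$ should converge, after rescaling by $\varepsilon^{n-1}$, to a hemisphere of the unit sphere; the constant integrand $1/(\omega_n\varepsilon^{n-1})$ is then integrated over an area tending to $\tfrac12\omega_n\varepsilon^{n-1}$, producing the factor $\tfrac12$ and hence $\chi(x)=\tfrac12$. Making this hemisphere limit rigorous — estimating the deviation of $\partial\Omega$ from its tangent plane inside $B(x,\varepsilon)$, and showing that both the cap-area error and the error from replacing the true boundary piece by the flat section vanish as $\varepsilon\to0$ — is the one step I expect to require genuine care, and it is precisely there that the smoothness of $\partial\Omega$ (equivalently the quadratic normal estimate of Lemma \ref{lem: estimate of normal vector}) is indispensable.
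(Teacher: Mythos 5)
Your proposal is correct and follows essentially the same route as the paper: the paper cites \cite{F95} for Lemma \ref{lem:Gauss formula} itself, but its own proof of the Riemannian generalization (Theorem 4.1) is exactly your argument specialized back to $\mr^n$ --- the same three-case split via the divergence theorem, the same excision of a small ball around the singularity giving $1$ in the interior case, and the same boundary treatment where Lemma \ref{lem: estimate of normal vector} yields absolute convergence of the improper integral and the excised spherical cap is shown, via the quadratic tangent-plane estimate, to have area $\tfrac12\omega_n\epsilon^{n-1}+O(\epsilon^n)$, producing the factor $\tfrac12$. Your plan for the hemisphere limit matches the paper's $C_{\epsilon}$, $C_{\epsilon}'$ comparison step precisely, so there is nothing to correct.
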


In general, we just know samples of points in $\partial \Omega$, but do not know the exact equation defining $\partial\Omega$.
Indeed the original purpose in \cite{LXSW22} is to find in an appropriate way the defining equation of $\partial\Omega$.
The process is as follows.  
Suppose $Y={\{ y_j\}}_{j=1}^{N_{Y}}\subset \partial\Omega$ is a point set that samples the hypersurface $\partial\Omega$, we get 
\begin{equation}\label{eqn: discreted indicator function in $R^n$}
    \chi(x)= -\int_{\partial \Omega}\nabla G(x-y)\cdot N(y)d\tau(y)\approx \sum_{j=1}^{N_{Y}}\frac{y_j-x}{\omega_n|x-y_j|^{n}}\cdot N(y_j)\tau(y_j).
\end{equation}
Then the remaining task is to solve out the volume elements $\tau(y_j)$.
If this is done, then we can view 
$$\sum_{j=1}^{N_{Y}}\frac{y_j-x}{\omega_n|x-y_j|^{n}}\cdot N(y_j)\tau(y_j)=1/2$$
as the defining equation of $\partial\Omega$.
In principle, this equation can be exact enough if the sample $\{y_j\}$ in $\partial\Omega$ is chosen good enough.

In the present work, the focus shifts from finding the defining equation of $\partial\Omega$ to calculating the integral $\int_{\partial\Omega}fd\tau$,
whose digital value can be approximated by $\sum_j f(y_j)\tau(y_j)$ once we can solve out the volume elements $\tau(y_j)$.

The way to calculate the volume elements $\tau(y_j)$ is as follows.
For each fixed $y_j$, 
$$\nabla G(x-y_j)=\frac{x-y_j}{\omega_n|x-y_j|^{n}}$$ 
is an $n$-dimensional vector-valued function of $x$, which we denote by 
$$g_j=(g_{j1},\cdots, g_{jn}):\mathbb{R}^n\rightarrow \mathbb{R}^n.$$
We set 
\begin{equation}\label{eqn: discreted surface element in $R^n$}
    \mu_j=(\mu_{j1},\mu_{j2},\ldots,\mu_{jn}):=N(y_j) \tau(y_j),
\end{equation}
then we have $\tau(y_j)=||\mu_j||$, and
\begin{equation}\label{discreted indicator function contain ing surface element in $R^n$}
    \chi(x)\approx \sum_{j=1}^{N_{Y}}g_j(x)\cdot\mu_j=\sum_{j=1}^{N_{Y}}\sum_{k=1}^{n}g_{jk}(x)\mu_{jk}.
\end{equation}
Let $X={\{ x_i\}}_{i=1}^{N_X=nN_{Y}}\subset \partial\Omega $ be another point set that also samples $\partial\Omega$, satisfying $X\cap Y=\emptyset$.
From equations \eqref{eqn: indicator function in $R^n$} and \eqref{discreted indicator function contain ing surface element in $R^n$}, we obtain 
\begin{equation}\label{the equation of indicator function}
    \sum_{j=1}^{N_{Y}}\sum_{k=1}^{n}g_{jk}(x_i)\mu_{jk}=1/2, \quad i = 1, 2, \ldots, N_X.
\end{equation}
By solving the above system of linear equations, we can obtain $\mu_{j}$ and hence get $\tau(y_j)=||\mu_j||$ as mentioned above.

A trouble in the above method is that $\frac{1}{|x-y_j|^n}$ can produce singularity and hence lead to big error if some $x\in X$ is close to $y_j$.
In \cite{LXSW22}, a method is proposed to soften the possible singularity.
We do not discuss the details of it here.

On the other hand, we are also interested in the case that the hyrpersurface $\partial\Omega$ is already known.
In this case of course we also know the interior $\Omega$ of $\partial\Omega$.
It follows that we can take $X$ to be a subset of $\Omega$ that keeps away from the boundary $\partial\Omega$ and then operate the above process to calculate the volume elements $\tau(y_j)$
by replacing the systems \eqref{the equation of indicator function} by the system
\begin{equation}
    \sum_{j=1}^{N_{Y}}\sum_{k=1}^{n}g_{jk}(x_i)\mu_{jk}=1, \quad i = 1, 2, \ldots, N_X.
\end{equation}
In this way, we never meet singularity since $Y$ lies in $\partial\Omega$.
Our modification of the method in \cite{LXSW22} is particularly effective if the equation of $\partial\Omega$ is already known.

\subsection{Hypersurfaces with Boundaries in $\mathbb{R}^n$}
In the previous subsection, we have discussed the integration of functions on compact hypersurfaces in $\mr^n$ without boundary.
That method can not be directly applied to hypersfuraces with boundary since for this case Lemma \ref{lem:Gauss formula} is never valid again.
In this section, we consider hypersurfaces with boundary and try to reduce them to the cases of hypersurfaces without bounday.
In this process, we need more information than samples of points of the hypersurfaces, namely, we also need know the normal vectors of the hypersurfaces at the sampling points.

We are know going to discuss the details.
Let $M$ be a compact hypersurface with boundary in $\mathbb{R}^n$, $Y = \{y_j\}_{j=1}^{N_Y}\subset M$ be a sample of points, 
and $N({y_j})$ be the outward-pointing unit normal vector of $M$ at $y_j$.
The insight is to consider a solid $\epsilon$-collar $\Omega$ of $M$, then the boundary $\partial\Omega$ gives us a compact (piecewise smooth) hypersurface without boundary (as shown in Figure 1).
The exact formula $\Omega$ is given by 
$$\Omega=\{y+tN(y)|y\in M, 0\leq t\leq \epsilon\},$$
and $\partial \Omega$ is given by 
$$\partial\Omega=M\cup\{y+\epsilon N(y)|y\in M\}\cup \{y+tN(y)|y\in \partial M, 0\leq t\leq \epsilon\}.$$

\begin{figure}[!h]
    \centering

        \includegraphics[width=0.6\textwidth]{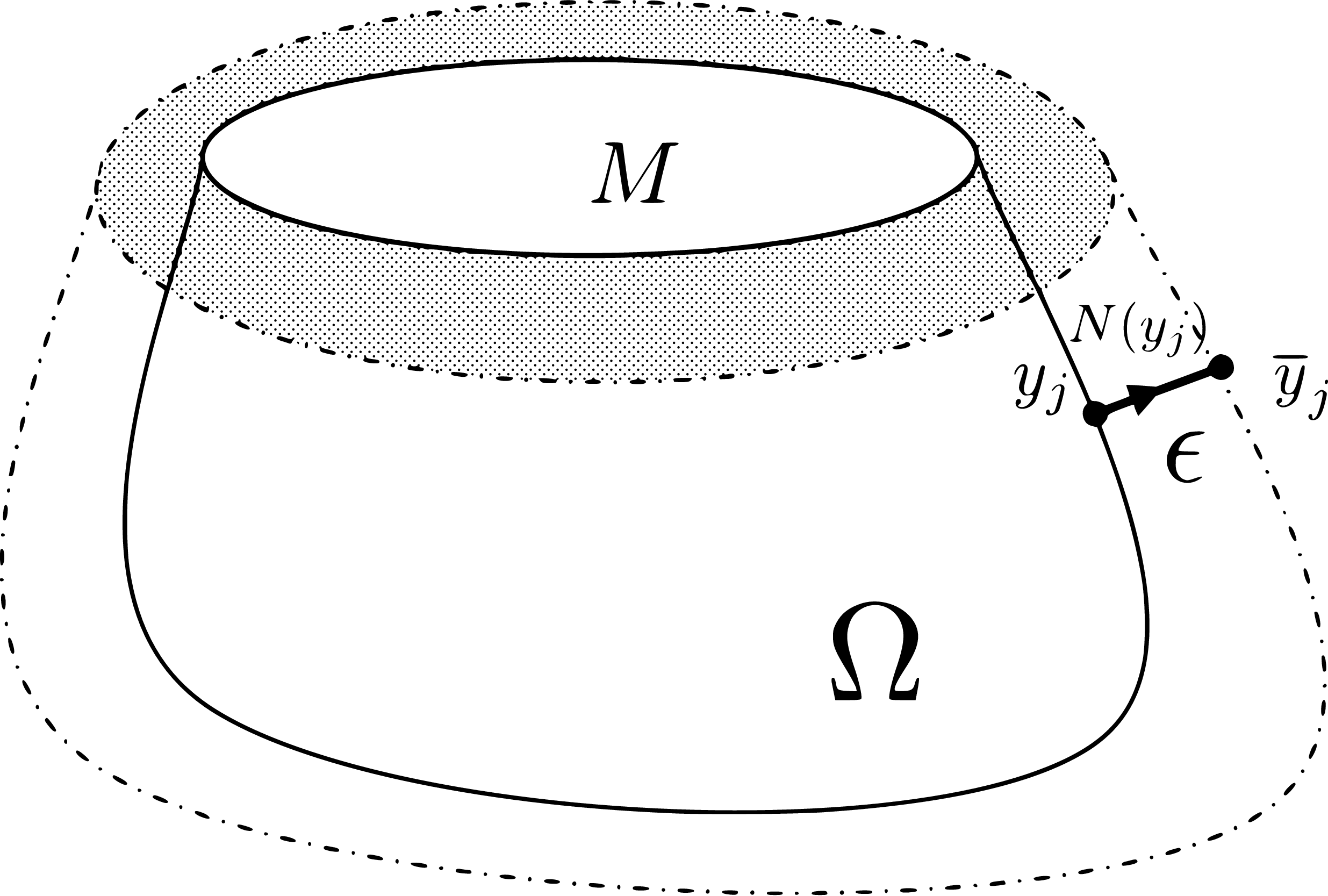}
    \caption{hypersurface with boundaries} 

\end{figure}

We now extend the sample $Y$ of points of $M$ to a sample $\tilde Y$ of points in $\partial \Omega$ by setting
$$\tilde Y=Y\cup\{\bar{y_j}:=y_j+\epsilon N(y_j)|y_j\in Y\}.$$
We then repeat the process in the previous section to calculate the volume elements $\{\tau(y_j), \tau(\bar{y_j})\}$. 
Indeed, in the present case, the number of the linear equations in  \eqref{the equation of indicator function} can be reduced to $2N_Y$
since $N(y_j)$ are already known and we can reasonably assume $N(\bar{y_j})=-N(y_j)$.

Let $f$ be a continuous function on $M$. 
Then the digital approximation of the integration $\int_Mf d\tau$ is given by 
$$\int_M fd\tau\approx \frac{1}{2}\sum^{N_Y}_{j=1}f(y_j)(\tau(y_j)+\tau(\bar{y_j})).$$
It is natural to expect that the right hand side approximates the real integral efficiently if $\epsilon$ is sufficiently small and $Y$ samples $M$ very well.

\subsection{Low-dimensional submanifold in $\mathbb{R}^n$}
The previous two subsections consider integration of functions on hypersurfaces in $\mr^n$, i.e., on submanifolds in $\mr^n$ of codimension 1.
In this subsection, we consider integration on general submanifolds in $\mr^n$ of higher codimension with or without boundary.
Let $M\subset\mr^n$ be a compact submanifold of codimension $r$ with boundary $\partial M$ ($\partial M=\emptyset$ if $M$ is boundary free).
Our trick is to consider tubular neighborhoods of $M$ (shown as in Figure 2) and reduce the calculation of integrations on $M$ to the calculation of integrations 
on the boundaries of the tubular neighborhoods, which are (piecewise smooth) compact hypersurface in $\mr^n$ without boundary,
then the method in \S \ref{subsec:hypersurface without boundary} applies.

\begin{figure}[!h]
    \centering
    \includegraphics[scale=0.3]{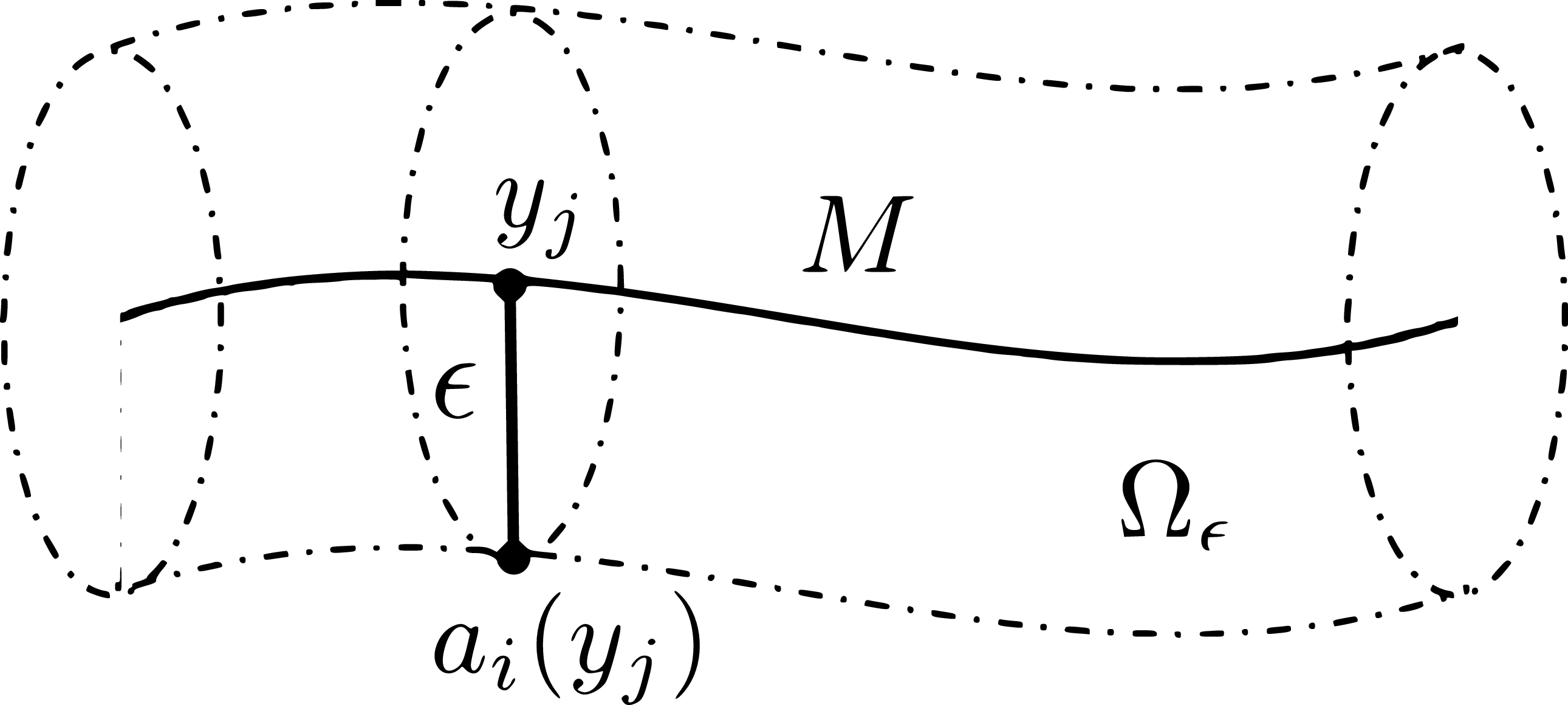}
    \caption{Low dimensional submanifold}
\end{figure}

If $N_1(y),\cdots, N_r(y)$ is the orthonormal basis of $Y$ at $y$, then the $\epsilon$-tubular neighborhood of $Y$ is given by 
$$\Omega_\epsilon=\{y + \sum_{k=1}^{r}\epsilon_{k}N_k(y)|y\in Y, \sum_{k=1}^{n-m}(\epsilon_{k})^2 =\leq \epsilon^2\}.$$
Its boundary is given by 
$$\partial\Omega_\epsilon=\left\{y + \sum_{k=1}^{r}\epsilon_{k}N_k(y)|y\in Y, \sum_{k=1}^{n-m}(\epsilon_{k})^2 = \epsilon^2\right\}\cup \left\{y + \sum_{k=1}^{r}\epsilon_{k}N_k(y)|y\in \partial Y, \sum_{k=1}^{n-m}(\epsilon_{k})^2 \leq \epsilon^2\right\},$$
which is a piece-wise smooth closed hypersurface in $\mr^n$.

The initial information that we need for the calculation consists of a sample of points $Y=\{y_j\}^p_{j=1}$ in $M$,
and a orthonormal basis $\{N_1(y_j),\cdots, N_p(y_j)\}$ of the $r$-dimensional normal subspace of $M$ at $y_j$ for $1\leq j\leq p$.
For the calculation of integrations of functions on $\partial\Omega_\epsilon$, we need to construct from the initial datum 
$(y_j; N_1(y_j),\cdots, N_r(y_j))^p_{j=1}$ a sample of points in $\partial\Omega_\epsilon$.
For this purpose, we choose a sample of points $\{a_1, \cdots, a_q\}$ in the sphere 
$$S^r(\epsilon)=\{(t_1,\cdots, t_r)\in \mr^r| t^2_1+\cdots+t^2_r=\epsilon\}$$
in $\mr^r$.
This gives us points say $\{a_1(y_j),\cdots, a_q(y_j)\}$ in the slice 
$$\left\{y_j + \sum_{k=1}^{r}\epsilon_{k}N_k(y_j)|\sum_{k=1}^{n-m}(\epsilon_{k})^2 = \epsilon^2\right\}$$
of $\partial\Omega_\epsilon$.
Then it is reasonable to set 
$$\tilde Y:=\{a_i(y_j)|1\leq i\leq q, 1\leq j\leq p\}$$
as a sample of points in $\partial\Omega_\epsilon$.
Now we can apply the process in \S \ref{subsec:hypersurface without boundary} to calculate the volume elements $\{\tau{a_i(y_j)}\}$.
Indeed, if we write $a_i=(a_{i1},\cdots, a_{ir})$, then the unit outward pointing normal of $\partial\Omega_\epsilon$ at $a_i(y_j)$ is given by 
$$y_j+a_{i1}N_1(y_j)+\cdots+a_{ir}N_r(y_j).$$
With these normal vectors, we can reduce the number of equations in the systems in  \eqref{the equation of indicator function} to $pq$.

Let $f$ be a continuous function on $M$.
Via the canonical projection $\pi:\Omega_\epsilon\ra M$, we get a function $\tilde f=f\circ\pi$ on $\Omega_\epsilon$.
As shown in \S \ref{subsec:hypersurface without boundary}, the integration $\int_{\partial\Omega_\epsilon}\tilde f$ can be approximated as:
$$\int_{\partial\Omega_\epsilon}\tilde f\approx \sum_{i, j}\tilde f(a_i(y_j))\tau(a_i(y_j))=\sum_{i, j}f(y_j)\tau(a_i(y_j)).$$
From Fubini's theorem, we can see that
$$\int_M fd\tau\approx \frac{1}{s_r}\sum_{i, j}f(y_j)\tau(a_i(y_j)),$$
where $s_r$ is the volume of the sphere $S^r(\epsilon)$.
We are done for digitally calculating integration of functions on $M$.

\section{Numerical calculation method for function integration in Riemannian manifolds}

In the previous section, we have considered the integral of a function on compact hypersurfaces with or without boundary, and low-dimensional submanifolds in $R^n$. 
In this section, we will prove a key theorem similar to lemma \ref{lem:Gauss formula}, which will play a crucial role in extending the above method to the case of Riemannian manifolds. 
We will focus on hypersurfaces in compact Riemannian manifolds since fundamental solutions on compact Riemannian manifolds satisfy certain good estimates.
Similar method can be extended to noncompact manifolds whose fundamental solutions satisfies similar estimates.

Firstly, we fix some notations. let $M$ be a compact oriented Riemannian manifold of real dimension dim$M$ = n, with smooth Riemannian metric $g$ and $\Omega$ is a relatively compact domain with smooth boundary in the Riemannian manifold. Let $N$ be the outward-pointing unit normal vector field on $\partial \Omega$. Now we start to prove the theorem, which expresses the 0-1 indicator function of $\Omega$ in terms of certain integral on the boundary.

\begin{thm}
    Let $\Omega \subset (M,g)$ be a bounded open set with smooth boundary $\partial \Omega$. Then
   
   \begin{equation} \label{eqn: indicator function}
    \chi(p):= - \int_{\partial \Omega} g ( \nabla_q G(p, q), N(q)  )d\tau(q) =
    \begin{cases}
        0 , p= M\setminus \overline{\Omega}\\
        \frac{1}{2},p=\partial \Omega\\ 
        1,p\in \Omega.\\
    \end{cases}
   \end{equation}
   where $d\tau(q)$ is the area form of $\partial \Omega$, and $G(p, q)$ is the fundamental solution to the Laplace equation, i.e.
   \begin{equation}
        \Delta_{q}^{distr.}G(p,q) =\delta_p
   \end{equation} 
   in the sense of distributions, where $\delta_p$ is the Dirac measure at $p$.
\end{thm}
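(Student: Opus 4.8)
The plan is to transplant the Euclidean argument behind Lemma \ref{lem:Gauss formula} to the Riemannian setting, using the Divergence Theorem (Theorem \ref{thm: the divergence theorem}) in place of the Gauss formula and reading off the local model from the Gauss Lemma (Theorem \ref{thm: Gauss lemma}) and the normal-coordinate expansion \eqref{eqn: exponential coordinates}. The only structural facts I will need about the fundamental solution are that, for fixed $p$, the function $q\mapsto G(p,q)$ is harmonic away from $q=p$ (which is exactly $\Delta_q G=\delta_p$ read off the diagonal), and that near the diagonal it carries the same leading singularity as in the flat case, $G(p,q)=\frac{d(p,q)^{2-n}}{(n-2)\omega_n}+(\text{less singular terms})$, where the remaining terms are smooth enough that their gradient has vanishing flux through shrinking geodesic spheres. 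I will treat the three cases in turn.

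For $p\in M\setminus\overline{\Omega}$ the vector field $X=\nabla_q G(p,\cdot)$ is smooth on a neighborhood of $\overline{\Omega}$ and divergence-free there, so Theorem \ref{thm: the divergence theorem} applied on $\Omega$ gives $\int_{\partial\Omega}g(\nabla_q G,N)\,d\tau=0$ and hence $\chi(p)=0$. For $p\in\Omega$ I excise a small geodesic ball $B(p,\varepsilon)\subset\Omega$, whose existence and diffeomorphism type are guaranteed by Theorem \ref{thm: exponential map diffeomorphism}, and apply the Divergence Theorem on $\Omega\setminus\overline{B(p,\varepsilon)}$, where $G(p,\cdot)$ is harmonic. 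The flux through $\partial\Omega$ then equals the flux of $\nabla_q G$ outward through the geodesic sphere $\partial B(p,\varepsilon)$. By the Gauss Lemma $\nabla_q d(p,\cdot)=\partial_r$, so the leading part of $\nabla_q G$ on $\partial B(p,\varepsilon)$ is $-\frac{1}{\omega_n}\varepsilon^{1-n}\partial_r$; pairing with the unit outward normal $\partial_r$ and integrating against the area $\omega_n\varepsilon^{n-1}(1+O(\varepsilon^2))$ gives $-1$ in the limit, while the less singular part of $G$ contributes flux of order $\varepsilon$. Thus $\int_{\partial\Omega}g(\nabla_q G,N)\,d\tau=-1$ and $\chi(p)=1$.

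The delicate case is $p\in\partial\Omega$, where the answer must be $\tfrac12$. I again excise $B(p,\varepsilon)$ and apply the Divergence Theorem on $\Omega\setminus\overline{B(p,\varepsilon)}$, whose boundary splits into $\partial\Omega\setminus B(p,\varepsilon)$ and the spherical cap $\partial B(p,\varepsilon)\cap\Omega$. As $\varepsilon\to 0$ the first piece tends to the absolutely convergent improper integral $\int_{\partial\Omega}g(\nabla_q G,N)\,d\tau$; its convergence follows from the Riemannian analogue of Lemma \ref{lem: estimate of normal vector}, which bounds the normal component of $\exp_p^{-1}(q)$ by $c\,d(p,q)^2$ and hence the double-layer kernel by $C\,d(p,q)^{2-n}$, an integrable order on the $(n-1)$-dimensional boundary. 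The heart of the matter is the second piece: in normal coordinates at $p$ the metric is Euclidean to first order \eqref{eqn: exponential coordinates} and $\partial\Omega$ coincides with its tangent hyperplane to first order, so $\partial B(p,\varepsilon)\cap\Omega$ fills out half of the geodesic sphere up to an error vanishing with $\varepsilon$; since the leading radial flux density is constant over the sphere, this half contributes exactly $-\tfrac12$. Combining the two pieces yields $\int_{\partial\Omega}g(\nabla_q G,N)\,d\tau=-\tfrac12$, so $\chi(p)=\tfrac12$.

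I expect the main obstacle to be precisely this boundary computation: making simultaneously rigorous the claims that the geodesic sphere meets $\Omega$ in an asymptotic hemisphere and that the first-order curvature corrections---both to the metric and to the shape of $\partial\Omega$---contribute vanishing flux, so that the clean constant $\tfrac12$ survives the limit. A secondary technical point, used in all three cases, is the justification that the less singular part of the fundamental solution has gradient with vanishing flux through the shrinking spheres; this is where the singularity expansion of $G$ and the integral estimates of Lemma \ref{lem: the estimate of integration} are needed.
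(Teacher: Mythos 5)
Your proposal is correct and follows essentially the same route as the paper's proof: the divergence theorem on $\Omega$ (respectively on $\Omega$ minus a small geodesic ball), normal coordinates in which $G$ has the flat leading singularity, the $O(d(p,q)^2)$ bound on the normal component to control the double-layer kernel near $p\in\partial\Omega$, and the asymptotic-hemisphere computation on $\partial B(p,\epsilon)\cap\Omega$ yielding the value $\tfrac12$. The only difference is one of explicitness: where you posit a singularity expansion of $G$ with ``less singular'' remainder, the paper carries this out concretely via Aubin's parametrix $G=H+\sum_{i=2}^{k}H_i+F$, using Lemma \ref{lem: the estimate of integration} to bound $\nabla_q H_i$ and a $C^{2,\theta}$ bound on $F$ --- precisely the technical estimates you correctly anticipated would be needed.
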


\begin{proof}
    First, we prove  $\chi(p)$ is defined for all $p\in M$. For any $p \in M\setminus\partial\Omega$, the indicator function $\chi(p)$ is well defined  by the well definition of the double layer potential, see (\cite{MT0})(\cite{MT}). 
    
   Consider $p \in \partial \Omega$. It suffices to show that $$\chi(p)= - \int_{\partial \Omega} g ( \nabla_q G(p, q), N(q)  )d\tau(q)$$ is uniformly bounded independent of $p$. We suppose $(U, \varphi , x^i)$ are Riemannian normal coordinates charts and take a geodesic ball with a small appropriately radius $\frac{r_0}{4} < \epsilon < r_0$ as the exponential coordinates neighborhood of point p such that $$B(p, \epsilon) = exp_{p}(B_{p}(\epsilon)), \varphi(p) = x := (x_1, x_2,\ldots, x_n) = 0. $$ 
   we set $$\varphi(q) = y := (y_1, y_2,\ldots, y_n) $$ for each $q \in B(p, \epsilon) \cap \partial \Omega$.

    We suppose that $r_0$ is injective radius of $(M, g)$ and  $f(r)$ is a positive decreasing function such that:
    \begin{equation}
        f(r) = 
        \begin{cases}
            1, r < \frac{r_0}{2}\\
            0, r > r_0\\
        \end{cases}, \quad  0 \leq f(r) \leq 1 \quad and \quad |f'(r)| < \frac{100}{r_0},
    \end{equation}
    where $r = d(p,q)$.
    Set $$H(p.q) = \frac{f(r)}{(n-2)\omega_{n}r^{n-2}}$$ and we obtain $$|\bigtriangledown_{q}H(p,q)| = |\frac{f'(r)}{(n-2)\omega_{n}r^{n-2}} - \frac{f(r)}{\omega_{n}r^{n-1}}| \leq cr^{1-n}$$ where $c$ is a constant.
    
    From the proof of Theorem4.17 in Aubin (\cite{A82}), We also define $\varGamma(p,q) = \varGamma_1(p,q) = - \Delta_{q} H(p,q)$, $\varGamma_{i+1}(p,q)=\int_{M}\varGamma_i(p,t)\varGamma(t,q)d\tau(t)$ and $H_{i+1}(p,q) =\int_{M}\varGamma_i(p,t)H(t,q)d\tau(t)$ for $i \in \mathbb{N} $. Let $ k = [\frac{n}{2}]+1$, where $[\frac{n}{2}]$ is the greatest integer not exceeding $\frac{n}{2}$. Then we obtain the expression of the fundamental solution:
    \begin{equation}
        G(p,q) = H(p,q) + \sum_{i=2}^{k}H_i(p,q) + F(p,q)
    \end{equation}
    where $\Delta_{q} F(p,q) = \varGamma_{k+1}(p,q) $, such that $$\Delta_{q}^{distr.}G(p,q) = \delta_p.$$
    
    We need the following estimate of the gradient of the the fundamental solution $G(p,q)$.
    By lemma \ref{lem: the estimate of integration} ,we get the estimates as follows:
    \begin{equation}
        |\varGamma_i(p,q)| \leq 
        \begin{cases}
          \frac{C_0}{d(p,q)^{n-2i}}, \quad if \quad 2i < n,\\
          C_0(1 + |\ln d(p,q)|), \quad if \quad 2i =n,\\
          C_0, \quad if \quad 2i >n.\\
        \end{cases} 
    \end{equation}
    and 
   \begin{equation}
    |\bigtriangledown_q H_{i+1}(p,q)| \leq \int_{M}|\varGamma_{i}(p,t)|\cdot|\bigtriangledown_qH(t,q)d|\tau(t) \leq
       \begin{cases}
          \frac{C_1}{d(p,q)^{n-2i-1}}, \quad if \quad 2i+1 < n,\\
          C_1(1 + |\ln d(p,q)|), \quad if \quad 2i+1 =n,\\
          C_1, \quad if \quad 2i+1 >n.\\
       \end{cases}
   \end{equation}
   where $C_0, C_1$ are constants independent of $p$.
   Therefore, from the appendix A of \cite{DHR04}, there exsits a constant $C > 0$ depending only on $(M, g)$ such that $ ||F(p, \cdot)||_{C^{2, \theta }} < C$. So $$ |\bigtriangledown_q F(p,q)| < C.$$


   In $\partial \Omega \cap B(p, \frac{r_0}{4})$, utilizing \ref{eqn: exponential coordinates} and lemma \ref{lem: estimate of normal vector}, we can get 
   $$ \int_{\partial \Omega\cap B(p,\frac{r_0}{4})}|\frac{\partial H(p,q)}{\partial N(q)}| d\tau(q) = \int_{\phi(\partial \Omega) \cap B_p(\frac{r_0}{4})} |\frac{(y-x)\cdot N(y)}{\omega_n|x-y|^{n}}|d\tau(y) \leq \int_{\phi(\partial \Omega) \cap B_p(\frac{r_0}{4})} \frac{1}{\omega_n|x-y|^{n-2}} d\tau(y) \leq C_2 $$ where $C_2$ is constants independent of $p$.

   In $\partial \Omega \cap B(p, \frac{r_0}{4})^{c}$, because $\partial \Omega$ is compact, we have $$\int_{\partial \Omega \cap B(p,\frac{r_0}{4})^{c}} |\frac{\partial G(p,q)}{\partial N(q)}|d\tau(q) \leq C_3$$ where $C_3$ is independent of $p$.

   Therefore, we have:
   \begin{equation}
    \begin{aligned}
      |-\int_{\partial \Omega}\frac{\partial G(p,q)}{\partial N(q)}d\tau(q)|
       & \leq \int_{\partial \Omega}|\frac{\partial G(p,q)}{\partial N(q)}| d\tau(q)\\
      & \leq \int_{\partial \Omega\cap B(p,\frac{r_0}{4})} |\frac{\partial G(p,q)}{\partial N(q)}|d\tau(q) + \int_{\partial \Omega\cap B(p,\frac{r_0}{4})^{c}} |\frac{\partial G(p,q)}{\partial N(q)}|d\tau(q)\\
      & \leq \int_{\partial \Omega\cap B(p,\frac{r_0}{4})}|\frac{\partial H(p,q)}{\partial N(q)}| + \sum_{i=2}^{k}|\frac{\partial H_i(p,q)}{\partial N(q)}| + |\frac{\partial F(p,q)}{\partial N(q)}|d\tau(q) +C_3\\
      & \leq C_4.
    \end{aligned}
\end{equation}
    where $C_4$ is only depends on $(M, g)$. So \eqref{eqn: indicator function} is defined for $p\in M$.

  Next, we prove that the right hand side of (\eqref{eqn: indicator function}) is vaild, then we consider three cases. For the first case, we assume that $p \in M\setminus \partial\Omega$. By the theorem \ref{thm: the divergence theorem}, we know $$ 0 = \int_{\Omega}\Delta_qG(p, q) d\sigma(q) = \int_{\partial \Omega} g(\nabla G(p, q), N(q)) d\tau(q),$$ where $d\sigma(q)$ is the volume form of $\partial \Omega$ at $q$.

  For the second case, we assume that $p \in \Omega$. Let $B(p, \epsilon)$ is a geodesic ball on $\Omega$. Indeed, utilizing the theorem \ref{thm: the divergence theorem}, we have $$ 0 = \int_{\Omega - B(p, \epsilon )} \Delta_q G(p, q) d\sigma(q) = \int_{\partial \Omega} g(\nabla_q G(p, q), N(q))d\tau(q) + \int_{\partial B(p, \epsilon)} g(\nabla_q G(p, q), N(q))d\tau(q),$$ which is equivalent to the following: $$ \chi(p) = \int_{\partial B(p, \epsilon)} g(\nabla_q G(p, q), N(q))d\tau(q).$$ Now suppose $(U, \varphi , x^i)$ are Riemannian normal coordinates charts. Therefore, we can obtain
    \begin{equation}
        \begin{aligned}
            \int_{\partial B(p, \epsilon)} &g(\nabla_q G(p, q), N(q))d\tau(q)\\
            &=  \int_{\partial B_p(\epsilon)} |\frac{(y-x)\cdot N(y)}{\omega_n|x-y|^{n}}|d\tau(y)  d\tau(y)+ O(\epsilon)\\
            &= \int_{\partial B_p(\epsilon)} |\frac{(y-x)\cdot \frac{x-y}{|x-y|}}{\omega_n|x-y|^{n}}|d\tau(y)  d\tau(y)+ O(\epsilon)\\
            &= \int_{\partial B_p(\epsilon)} \frac{1}{\omega_n|x-y|^{n-1}}d\tau(y)  d\tau(y)+ O(\epsilon)\\
            &= 1 + O(\epsilon).
        \end{aligned} 
    \end{equation}
 So we have $$\chi(p) = 1 + O(\epsilon).$$
      
For the last case, we assume that $p \in \partial \Omega$. Set 
   $$\Omega_{\epsilon} = \Omega - (\Omega \cap B(p, \epsilon)), C_{\epsilon} = \{q \in \partial B(p, \epsilon),N(p)\cdot q < 0 \}, C_{\epsilon}^{'} = \partial M_{\epsilon}\cap C_{\epsilon}.$$ Similarly, by the theorem \ref{thm: the divergence theorem}, we have
   \begin{equation}
    \begin{aligned}
        0 &= \int_{\Omega_{\epsilon}}\Delta_qG(p, q)d\sigma(q) \\
        &= \int_{\partial \Omega_{\epsilon}} g(\nabla G(p, q), N(q)) d\tau(q)\\
        &= \int_{\partial \Omega_{\epsilon} - C_{\epsilon}^{'}} g(\nabla G(p, q), N(q)) d\tau(q) + \int_{C_{\epsilon}^{'}} g(\nabla G(p, q), N(q)) d\tau(q).
    \end{aligned}
   \end{equation}
Observe that 
\begin{equation}
    \begin{aligned}
        \int_{C_{\epsilon}^{'}} g(\nabla G(p, q), N(q)) d\tau(q) &= \int_{\varphi(C_{\epsilon}^{'})}  \frac{1}{\omega_n|x-y|^{n-1}}d\tau(y) + O(\epsilon)\\
        &= \frac{1}{\omega_{n}\epsilon^{n-1}}\int_{C_{\epsilon}^{'}} d\tau(q)+ O(\epsilon)
    \end{aligned}
\end{equation} 
We want to show that $\int_{C_{\epsilon}^{'}} d\tau(q) = \int_{C_{\epsilon}} d\tau(q) + O(\epsilon^n).$ The surface area is clearly determined by the product of the base area and the height. By the Taylor's Theorem, for every $q \in \partial B(p, \epsilon)$, we have $|(p-q)\cdot N(p)| < O(\epsilon^2)$. Additionally, the base area is $O(\epsilon^{n-2})$. Thus $\int_{C_{\epsilon}^{'}} d\tau(q) = \int_{C_{\epsilon}} d\tau(q) + O(\epsilon^n) = \frac{1}{2}\omega_{n}\epsilon^{n-1} + O(\epsilon^n)$.
Combining the above equations, we infer that
$$\int_{\partial \Omega_{\epsilon} - C_{\epsilon}^{'}} g(\nabla G(p, q), N(q)) d\tau(q) = -\frac{1}{2} +O(\epsilon).$$ Taking the limit as $\epsilon \rightarrow 0$, we prove $$\int_{\partial \Omega} g(\nabla G(p, q), N(q)) d\tau(q) = -\frac{1}{2}.$$

 \end{proof}

 Summarizing the method in \S \ref{subsec:hypersurface without boundary}, the algorithm to calculate the integral of a function on the boundaries related compact domains in Riemannian manifolds involves four steps as follows:
 
 $\mathbf{Step 1.}$
 Construct the indicator function. For a domain $\Omega$ with smooth boundary $\partial \Omega$ on $(M,g)$, we have $$\chi(p)= - \int_{\partial \Omega} g ( \nabla_q G(p, q), N(q)  )d\tau(q).$$
  
 $\mathbf{Step 2.}$ 
 Discretize the indicator function. Assume that a point set $\mathcal{Q}={\{ q_j\}}_{j=1}^{N_{\mathcal{Q}}}\subset \partial\Omega$ is given. In local coordinates, let $$\nabla G(p-q_j) = G^{jk}(q)\frac{\partial}{\partial x^k}|_{q_j}, $$ and $$N(q_j)\cdot \tau(q_j) = \mu^{jl}(q_j)\frac{\partial}{\partial x^l}|_{q_j}$$ where $k,l = 1,2,\ldots, n$. Then by combining the above equations, we get the discrete version of the indicator function: 
 \begin{equation}
    \begin{aligned}
        \chi(p) &= \int_{\partial \Omega} g (\nabla G(p,q)\cdot N(q))d\tau(q)\\
        &\approx \sum_{j=1}^{N_{\mathcal{Q}}}g(\nabla G(p, q_j), N(q_j))\tau(q_j)\\
        &= \sum_{j=1}^{N_{\mathcal{Q}}}g(\nabla G(p, q_j), N(q_j)\cdot \tau(q_j))\\
        &= \sum_{j=1}^{N_{\mathcal{Q}}}\sum_{k,l=1}^{n}G^{jk}(p)\mu^{jl}(q_j)g_{kl}(q_j)
    \end{aligned}
 \end{equation}

 $\mathbf{Step 3.}$
 Calculate the volume elements $\tau (q_j)$. 
 Let $\mathcal{P}={\{ P_i\}}_{i=1}^{N_{\mathcal{P}}} \subset \Omega$ be another point set that also samples $\partial \Omega$, satisfying $\mathcal{P} \cap \mathcal{Q} = \emptyset$. We can obtain the following equations: $$\chi(p_i) = \sum_{j=1}^{N_{\mathcal{Q}}}\sum_{k,l=1}^{n}G^{jk}(p_i)\mu^{jl}(q_j)g_{kl}(q_j) = 1/2, \quad i = 1,2,\ldots, N_{\mathcal{P}}.$$ By solving the above system of linear equations, we can obtain $\mu^{jl}(q_j)$ and hence get $\tau (q_j) = ||\mu^{jl}(q_j)||$.

 $\mathbf{Step 4.}$ 
 Calculate the integral $\int_{\partial \Omega} f d\tau$.  For any $ \in C^{0}(M)$, we have $$\int_{\partial\Omega}f(p)d\tau(p) = \sum_{j=1}^{N_{\mathcal{Q}}}f(q_j)\tau_j = \sum_{j=1}^{N_{\mathcal{Q}}}f(q_j)\tau(q_j). $$

The above method for digital calculation of function integration 
is efficient in the case that the explicit form of the fundamental solution on $M$ is known.
Such manifolds often has large symmetry, for example, Riemannian symmetric spaces.

\bibliographystyle{amsplain}

\end{document}